\documentclass[11pt]{article}
\usepackage[utf8]{inputenc}

\usepackage{amsthm}
\usepackage{amssymb}
\usepackage{dsfont}
\usepackage{stmaryrd}
\usepackage{amsmath}
\usepackage{wrapfig}
\usepackage{overpic}

\newtheorem{theorem}{Theorem}
\newtheorem{conj}{Conjecture}
\newtheorem{lemma}[theorem]{Lemma}
\newtheorem{proposition}{Proposition}

\newtheorem{ques}{Question}
\newcommand{ \field }{ \mathbb{F}_p[t, t^{-1}] }
\newcommand{\F}{\mathbb{F}}

\newcommand{\N}{\mathbb{N}}
\newcommand{\Z}{\mathbb{Z}}

\DeclareMathOperator{\RF}{RF}
\DeclareMathOperator{\GL}{GL}
\DeclareMathOperator{\LCM}{LCM}

\title{Residual finiteness growths \\of Lamplighter groups}
\author{Khalid Bou-Rabee \and Junjie Chen \and Anastasiia Timashova}

\date{June 2019}

\begin{document}

\maketitle

\begin{abstract}
    Residual finiteness growth gives an invariant that indicates how well-approximated a finitely generated group is by its finite quotients.
    We briefly survey the state of the subject. We then improve on the best known upper and lower bounds for lamplighter groups. Notably, any lamplighter group has super-linear residual finiteness growth. In our proof, we quantify a congruence subgroup property for lamplighter groups.
\end{abstract}

\section{Introduction}

Recall that a group is \emph{residually finite} if the intersection of all its normal finite-index subgroups is trivial. This property, which is closed under direct products, passing to subgroups, and passing between commensurable groups, has a number of equivalent notions:
\begin{eqnarray*}
&\hskip.1in& \bigcap_{[G:\Delta] < \infty, \Delta \lhd G } \Delta = \{ 1 \}
\iff \bigcap_{[G:\Delta] < \infty, \Delta \leq G } \Delta = \{ 1 \} \\
&\iff& \forall g \in G \setminus \{1 \}, \exists \text{ a homomorphism } \phi: G \to Q,\; Q \text{ finite}, \phi(g) \neq 1 \\
&\iff&  G \text{ acts by automorphisms on a rooted locally finite tree.}
\end{eqnarray*}
For an element $g \in G$ where there is a homomorphism $\phi: G \to Q$ with $\phi(g) \neq 1$, we say $\phi$ or $\ker\phi$ \emph{detects} $g$, or that $g$ \emph{survives} through $\phi$.

Residual finiteness is a fundamental property that all presently known word-hyperbolic groups and all finitely generated linear groups possess.\footnote{Groups that are not residually finite include: the rational numbers, the non-solvable Baumslag-Solitar groups ($\left< a, b : a b^p a^{-1} = b^q \right>$), abstract commensurators of surface groups, and all infinite simple groups.} Let $B_{\Gamma, X} (n)$ denote the metric $n$-ball in $\Gamma$ with respect to the word metric $\| \cdot \|_{\Gamma, X}$.
To quantify residual finiteness, \cite{B09} posed the following question:

\begin{ques} \label{quantifyresidualfinitenessquestion}
How large of a group do we need to detect elements in $B_{\Gamma, X}(n)$? 
For instance, what is the smallest integer $\RF_{\Gamma,X}(n)$ such that each nontrivial element in $B_{\Gamma,X}(n)$ is detected by a finite group of cardinality no greater than $\RF_{\Gamma, X}(n)$?
\end{ques}

% \begin{wrapfigure}{r}{170pt}
% \center{\scalebox{.20}{
% \includegraphics{CubesCover.png}}
% }
% \caption{\tiny{A 27 fold covering map of the figure eight (clear arrows map to one circle and the shaded arrows the other) drawn on the standard identification space of a two-dimensional torus. This cover corresponds to the characteristic subgroup of a free group of rank 2 generated by all 3rd powers of elements.}} \label{Figure1}
% \end{wrapfigure}
There are a number of natural ways to view the function $\RF_{\Gamma,X}$. In algebra, it gives a quantification of some congruence subgroup property for the group.
In geometry, it measures the interaction of the word metric and the profinite metric. In decision theory, it quantifies how the word problem may be solved through finite quotients.
In topology, it determines how large an index of a regular cover we need to lift a given closed loop to a non-closed loop.
% This topological connection is both beautiful and very important, so we exhibit this connection through a fundamental example.
% Let $X$ be the figure eight with simple loops $a$ and $b$ generating the fundamental group of $X$.
% The Galois correspondence from algebraic topology gives us a bijection between normal finite-index subgroups of the free group of rank 2 and covers of $X$ that are regular.
% For instance, a regular cover corresponding to a normal subgroup detecting $a b a^{-1} b^{-1}$ is given by Figure \ref{Figure1}.
% Notice that if you trace out the word starting from any vertex it does not close up to a loop. This characterizes  covers that correspond to normal subgroups that do not detect a given element.

% Such regular graphs are captivating. The graph in Figure \ref{Figure1} lives naturally on a torus.  A drawback to this method of producing quotients is that there is no clear way to construct optimal covers. A more effective approach involves maps onto finite simple groups.
% For instance, the element $a b a^{-1} b^{-1}$ is detected by the quotient  Alt$(3)$ given by  $a \mapsto (1\; 2\; 3)$ and $b \mapsto (1\; 2)$.
% This quotient has order 6 and, in this case, is the optimal quotient that detects $aba^{-1} b^{-1}$. While the methods proposed in the sequel do not always use the topological viewpoint, this connection serves as a primary motivation to the subject.

% We recall some definitions that will aid in the statements of results and proof techniques.
For a group $\Gamma$ and any subset $S \subset \Gamma$, let $S^\bullet$ denote the set of all non-identity elements in $S$.
Question \ref{quantifyresidualfinitenessquestion} may be approached through the study of the asymptotic behavior of the \emph{divisibility function} (aka the \emph{valuation function}) $D_\Gamma : \Gamma^\bullet \to \N$ defined by
$$
D_\Gamma(\gamma) = \min \{ [ \Gamma : \Delta] : \gamma \notin \Delta, \Delta \lhd \Gamma \}.
$$
The integer $\RF_{\Gamma, X}(n)$ is the maximum value of $D_\Gamma$ on $B^\bullet_{\Gamma, X}(n)$.
Groups for which $D_\Gamma |_{\Gamma^\bullet}$ take on finite values are precisely residually finite groups.
The statistics of divisibility functions is the subject of quantifying residual finiteness, a notion introduced in \cite{B09}. Related functions have been studied in \cite{MR3430831, MR3570303, MR3530864, BD14}.
Rivin also did some analysis on this question by giving a (conjecturally) sharp result graded by the depth of an element in the lower central series, as well as ``ungraded'' results \cite{R12}.

These functions have been computed for a large selection of examples by numerous people, please see Table \ref{QRFTable}. 
Keep in mind that large values for $\RF_{\Gamma,X}$ indicate that the group is not well approximated by finite quotients.

\begin{table}[h]
\setlength\tabcolsep{2pt}
\label{QRFTable}
\centering
\begin{tabular}{|c|c|c|c|}  
\hline
group $G$ & dominated by & dominates & citation \\ [0.5ex] % inserts table %heading
\hline\hline 
fg infinite & nothing & $ \log\log(n) $ & \cite{BS13b}, \cite{BM13} \\
[0.5ex]
fp solvable & no definable & 1 & \cite{MR3671739} \\ 
 &  function & & \\ [0.5ex]
fg infinite linear &$n^{k_G}$ & $\log(n)$ & \cite{BM13} \\ [0.5ex] % inserts table %heading
higher rank arithmetic & $n^{\dim G}$ & $n^{\dim G}$ & \cite{BK12} \\ [0.5ex] % inserts table %heading
higher rank $G(\F_p[t])$ & $n^{\dim G}$ & $n^{\dim G}$ & \cite{MR3633299} \\ [0.5ex]
fg nonabelian free & $n^{3}$ & $\frac{n^{3/2}}{\log(n)^{9/2+\epsilon}}$ & \cite{B09}, \cite{MR3937332} \\ [1.2ex] % inserts table %heading
{\bf lamplighter} & $n^2$ &  $n^{3/2}$ &{\bf this paper} \\ [0.5ex]
fg infinite abelian group & $\log(n)$ & $\log(n)$ & \cite{B09} \\ [0.5ex] % inserts table %heading
fg infinite virtually nilpotent & $[\log(n)]^{k_G}$ & $\log(n)$ & \cite{B09} \\ [0.5ex] % inserts table %heading
the first Grigorchuk group & $2^n$ & $2^n$ &  \cite{B09} \\ [0.5ex]
the Gupta-Sidki $p$-group &  & $2^{n^{\frac{\log(p)}{\log(3)}}}$ &  \cite{MR3704931} \\ [0.5ex]% inserts table %heading
the Pervova group &$2^{n^{c}} $ & $2^{n^{d}} $ &  \cite{MR3704931} \\ [0.5ex]% inserts table %heading
\hline
\end{tabular}
\caption{Bounds for $\RF_{\Gamma, X}(n)$. We use \emph{fg} to abbreviate ``finitely generated'' and \emph{fp} to abbreviate ``finitely presented''.}
\label{table:nonlin} 
\end{table}
\noindent

\subsection{Our main result}
In this paper, we improve the best known upper and lower bounds for lamplighter groups. These groups are defined in terms of wreath products, see \S \ref{subsec:reps} for their definitions.
% In doing so, we show that the residual finiteness growths of lamplighter groups are never precisely polynomial. Asides from virtually nilpotent groups, this marks the first example of a finitely generated linear group that does not have precisely polynomial growth.
Before stating our contribution, we fix some notation: $f \preceq g$ means that there exists $C > 0$ such that $f(n) \leq C g(C n)$, and $f \asymp g$ means that $f \preceq g$ and $g \preceq f$. When $f \preceq g$ we say that $g$ \emph{dominates} $f$, or that $f$ is \emph{dominated by} $g$. The growth of $\RF_{\Gamma, X}(n)$ is, up to $\asymp$ equivalence, unchanged by the choice of $X$ \cite{B09}. Thus, when asymptotic growth is computed, the subscript $X$ is often dropped from $\RF_{\Gamma, X}(n)$.

\begin{theorem} \label{thm:one}
Let $p$ be a prime and let $L := \Z/p \wr \Z$. Then 
$$n^{3/2}  \preceq \RF_L(n) \preceq n^2.$$
\end{theorem}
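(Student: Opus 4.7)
My plan is to handle the upper and lower bounds separately, both grounded in a structural description of the finite quotients of $L$. Writing $L = A \rtimes \Z$ with $A = \F_p[t,t^{-1}]$ and $\Z = \langle t \rangle$ acting by multiplication, I would first establish a quantitative congruence subgroup property: every surjection $\pi : L \twoheadrightarrow Q$ onto a finite group is determined by a polynomial $q(t) \in \F_p[t]$ with $q(0) \neq 0$ for which $\ker(\pi) \cap A = (q)$, together with a positive integer $M$ which is a multiple of the multiplicative order $r(q)$ of $t$ in $(\F_p[t]/(q))^\times$, and $|Q| = p^{\deg q} \cdot M$. In particular every finite quotient satisfies $|Q| \geq p^{\deg q} r(q)$, and $\pi$ detects $(f, 0) \in L$ if and only if $q \nmid f$. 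Consequently $D_L((f, 0))$ equals $\min\{p^{\deg q} r(q) : q(0)\ne 0,\ \deg q \geq 1,\ q \nmid f\}$.

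The upper bound $\RF_L(n) \preceq n^2$ follows from this description. If $g = (f, k) \in L$ has word length at most $n$ and $k \neq 0$, the abelianization $L \twoheadrightarrow \Z/(|k|+1)$ detects $g$ with a quotient of size $O(n)$. If $k = 0$, then $\deg f = O(n)$, and a pigeonhole argument shows that among the $\sim p^d/d$ irreducibles of degree $d = \lceil \log_p(Cn) \rceil$, some $q$ does not divide $f$; the resulting quotient $\F_p[t]/(q) \rtimes \Z/r(q)$ has size at most $p^d (p^d - 1) = O(n^2)$.

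For the lower bound $\RF_L(n) \succeq n^{3/2}$ I would construct hard elements $g_T := (f_T, 0)$, where $f_T$ is the $\LCM$ of all polynomials $q \in \F_p[t]$ with $q(0) \neq 0$ and $p^{\deg q} r(q) \leq T$. By the quantified CSP established in the first step, $g_T$ lies in the kernel of every surjection onto a finite group of order at most $T$, so $D_L(g_T) > T$. Since the word length of $g_T$ is comparable to $\deg f_T$, proving $\RF_L(n) \succeq n^{3/2}$ reduces to the degree estimate $\deg f_T = O(T^{2/3})$.

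The main obstacle is this counting estimate. Grouping the irreducible factors of $f_T$ by multiplicative order $r = r(q)$: each such $q$ is an irreducible factor of the $r$-th cyclotomic polynomial $\Phi_r(t)$, has degree $d_r := \mathrm{ord}_r(p)$, and the product of all such $q$ over a fixed $r$ equals $\Phi_r$ of degree $\phi(r)$. Hence the dominant contribution to $\deg f_T$ is $\sum_{r : p^{d_r} r \leq T} \phi(r)$, with the higher powers of $q$'s appearing in $f_T$ contributing only a harmless lower-order correction. The naive bound $r \leq \sqrt T$ together with $\sum_{r \leq R} \phi(r) = O(R^2)$ only gives the linear estimate $O(T)$, which is insufficient. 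Instead I would group by $d$ and bound the degree-$d$ contribution by $\min\!\bigl(p^d,\, (T/p^d)^2\bigr)$, using respectively that the number of primitive elements of $\F_{p^d}^\times$ is at most $p^d$ and that the partial sum $\sum_{r \leq T/p^d} \phi(r) = O((T/p^d)^2)$. Splitting the sum at the balancing threshold $d \approx \tfrac{2}{3}\log_p T$ yields two geometric tails, each of total $O(T^{2/3})$, closing the argument.
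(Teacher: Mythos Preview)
Your upper bound is essentially the paper's: both pick, for an element $(f,0)$ of length $n$, an irreducible $q$ of degree about $\log_p n$ not dividing $f$ and pass to the associated congruence quotient of size $O(n^2)$.

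Your lower bound is correct but follows a genuinely different route. The paper builds an explicit test element
\[
p(t)\;=\;\LCM(P_d)\cdot\prod_{i=1}^{\sqrt{p^d}}(1-t^i),
\]
of degree (hence word length) $O(p^d)$, and then argues directly from the congruence subgroup property: any detecting quotient has ideal generator $\alpha$ with $\deg\alpha>d$ (because $\LCM(P_d)\mid p(t)$), and the cyclic part has order $k>p^{d/2}$ (because $\alpha\mid t^k-1$ while $\alpha\nmid\prod_{i\le p^{d/2}}(1-t^i)$), giving $[L:N]>p^{3d/2}$. No cyclotomic counting is needed; the two factors of $p(t)$ separately force the two pieces of the quotient to be large.

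You instead take the ``universal'' hard element $f_T=\LCM\{q:p^{\deg q}r(q)\le T\}$, which by construction dies in every quotient of order at most $T$, and then reduce everything to the analytic estimate $\deg f_T=O(T^{2/3})$. Your balancing argument over $d$ with the bound $\min\bigl(p^d,(T/p^d)^2\bigr)$ is correct, and the higher-power contribution is indeed lower order (each prime power $q_i^{e_i}$ with $e_i\ge2$ forces $e_id_i\le\log_pT$, and there are only $O(\sqrt{T})$ irreducibles of degree at most $\tfrac12\log_pT$, so the total extra degree is $O(\sqrt{T}\log T)$). Your approach is more conceptual and yields the exact divisibility function on $A$, at the cost of the cyclotomic counting; the paper's construction is more elementary and ad hoc, trading sharpness for simplicity. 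Both land at the same $n^{3/2}$.
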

\noindent

We conclude the remarkable fact that the best lower bounds and upper bounds closely match those for nonabelian free groups (see Table \ref{QRFTable}). That is, loosely speaking, with present tools, it seems just as difficult to use finite quotients to approximate a lamplighter group as it is to approximate a nonabelian free group. Our proof methods are elementary, showing that the classical representations of lamplighter groups have a congruence subgroup property that works well with the word-length geometry of the group. See \S \ref{subsec:reps} for the representations and \S \ref{sec:theproofs} for the proofs. In particular, the congruence subgroup property for lamplighter groups is stated in Proposition \ref{prop:csp} in \S \ref{sec:theproofs}.

We view Theorem \ref{thm:one} as a quantification of the congruence subgroup property for $L$ (see Proposition \ref{prop:csp} in \S \ref{sec:theproofs}) because if one were to redefine $\RF_L$ by restricting to finite quotients that come from principal congruence quotients (or even ``ensuing congruence quotients'', see the definition and discussion after Proposition \ref{prop:csp})  one would arrive at the same upper and lower bounds. That is, it seems just as efficient to view $L$ through all its finite quotients as it is to view $L$ through principal congruence quotients.

Prior to this paper, the best known bounds for lamplighter groups were found in \cite{B11}. There it is shown that $\RF_L(n) \succeq n^{1/2}$. While lamplighter groups are linear in positive characteristic, it is quite difficult to find optimal bounds for them. Lamplighter groups, as opposed to the other linear groups in Table \ref{QRFTable}, do not have cyclic subgroups that are exponentially distorted and, moreover, are far from hyperbolic. Exponential distortion is a robust tool for finding bounds in linear groups. Recently, such methods were used to find lower bounds for residual finiteness growths of finitely generated solvable groups that admit infinite order elements in the Fitting subgroup of strict distortion at least exponential \cite{MP19}. Moreover, all non-elementary hyperbolic groups are malabelian (of which the lamplighter group, and any solvable group, is not), which is a significant tool for finding lower bounds for hyperbolic groups \cite{BM11}.

In \S \ref{sec:furtherdirections}, we formulate a number theoretic conjecture that we hope will eventually lead to determining the residual finiteness growth of lamplighter groups.  

% As a corollary to our methods, we obtain sharp results for the \emph{nilpotent residual finiteness growth function} from \cite{B09}.  This function studies the asymptotics of $D_\Gamma^{nil} : \Gamma^\bullet \to \N$ defined by
% $$
% D^{nil}_\Gamma(\gamma) := \min \{ [ \Gamma : \Delta] : \gamma \notin \Delta, \Delta \lhd \Gamma, \Gamma/ \Delta \text{ is nilpotent} \}.
% $$
% The integer $\RF_{\Gamma,X}^{nil}$ is the maximum value of $D_\Gamma$ on $B_{\Gamma,X}^\cdot(n)$.

% \begin{theorem} \label{thm:two}
% Let $p$ be a prime and let $L := \Z/p \wr \Z$. Then 
% $$\RF_L^{nil}(n) \asymp n^2.$$
% \end{theorem}

\paragraph{Acknowledgements}
We are grateful to Ahmed Bou-Rabee, Rachel Skipper, and Daniel Studenmund for giving us comments and corrections on an earlier draft.
This work is partially supported by NSF grant DMS-1820731 and PSC-CUNY Award \# 62134-00 50.

\section{Preliminaries}
\label{sec:prelims}

\subsection{Solvable groups}

Lamplighter groups are contained in the broad class of ``solvable groups''.
Let $\Gamma$ be a group. Set $\Gamma^{(k)}$ to be the  \emph{derived series} of $\Gamma$, defined recursively by
$$\Gamma^{(0)} = \Gamma \text{ and } \Gamma^{(k)} = [\Gamma^{(k-1)}, \Gamma^{(k-1)}].$$
A group $\Gamma$ is said to be \emph{solvable} if $G^{(k)} = 1$ for some natural number $k$.
The minimal such $k$ is called the \emph{solvable class} of $\Gamma$.
If, in addition to $\Gamma$ being solvable, each quotient $\Gamma^{(k)}/\Gamma^{(k+1)}$ is finitely generated, then $\Gamma$ is said to be \emph{polycyclic}.
Equivalently, a group $\Gamma$ is polycyclic if and only if $\Gamma$ is solvable and every subgroup of $\Gamma$ is finitely generated.
Set $\Gamma_k$ to be the lower central series for $\Gamma$, defined recursively by
$$\Gamma_0 = \Gamma \text{ and } \Gamma_k = [\Gamma_{k-1}, \Gamma].$$
A group $\Gamma$ is said to be \emph{nilpotent} if $\Gamma_k = 1$ for some natural number $k$. The minimal such $k$ is the \emph{nilpotent class} of $\Gamma$.

\subsection{Lamplighter groups}

\label{subsec:reps}
Let $A$ and $B$ be groups and $\Omega$ a set that $B$ acts on.
The \emph{wreath product}, denoted $A \wr B$, is defined to be the semidirect product 
$$
\left( \prod_{i \in \Omega} A_i \right) \rtimes B,
$$
\noindent
where each $A_i$ is a copy of $A$ and the action of $B$ on $\left( \prod_{i \in \Omega} A_i \right)$ is given by
$$
b ((a_i)_{i \in \Omega}) := (a_{b(i)})_{i \in \Omega}.
$$
Using this notation, \emph{$p$-Lamplighter group} is the wreath product $L := \Z/p \wr \Z$, where $\Omega = \Z$ and $\Z$ acts on $\Omega$ by addition. That is, for $b \in \Z$,
$$
b ((a_i)_{i \in \Omega}) := (a_{b+i})_{i \in \Omega}.
$$
When $p = 2$, each element of $\prod_{i \in \Omega} A_i$ is thought of as a state of an infinite line of lamps. Each coordinate signals whether a corresponding lamp is ``on'' or ``off'' with a 1 or 0. When $p > 2$, we think of the lamps as having different shades or colors. When it is clear what $p$ is from context we will simply call this semidirect product the \emph{lamplighter group}.

Each lamplighter group is a solvable group of class 2. Moreover, each contains an infinite direct product of copies of $\Z/p$, and hence is neither polycyclic nor nilpotent.
While the following two lemmas are likely well-known, we include them for completeness.

\begin{lemma} \label{lem:lamplighterlinear}
The group L is isomorphic to the group of matrices
$$
 L'= \left\{ \begin{pmatrix} t^k & p(t) \\ 0 & 1 \end{pmatrix} : k \in \Z, p(t) \in \F_p[t, t^{-1}] \right\}.
$$
\end{lemma}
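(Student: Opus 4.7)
The plan is to write down an explicit bijective homomorphism $\phi: L \to L'$ and verify the group axioms coordinate-by-coordinate. A typical element of $L$ has the form $((a_i)_{i \in \Z}, b)$ where $(a_i) \in \bigoplus_{i \in \Z} \Z/p$ (so only finitely many $a_i$ are nonzero) and $b \in \Z$. I would define
$$
\phi\bigl( (a_i), b \bigr) \;=\; \begin{pmatrix} t^{b} & \sum_{i \in \Z} a_i t^{-i} \\ 0 & 1 \end{pmatrix}.
$$
Since only finitely many $a_i$ are nonzero, the $(1,2)$-entry is a legitimate element of $\F_p[t,t^{-1}]$, so $\phi$ is well-defined into $L'$.

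Next I would verify that $\phi$ is a homomorphism by direct computation. On the $L$-side, the semidirect product rule and the action $b \cdot (c_i) = (c_{b+i})$ give
$$
\bigl((a_i), b\bigr)\bigl((c_i), d\bigr) \;=\; \bigl((a_i + c_{b+i}), b+d\bigr).
$$
On the $L'$-side, matrix multiplication gives
$$
\begin{pmatrix} t^{b} & \sum_i a_i t^{-i} \\ 0 & 1 \end{pmatrix} \begin{pmatrix} t^{d} & \sum_i c_i t^{-i} \\ 0 & 1 \end{pmatrix} = \begin{pmatrix} t^{b+d} & \sum_i a_i t^{-i} + t^{b} \sum_i c_i t^{-i} \\ 0 & 1 \end{pmatrix}.
$$
A re-indexing of the second sum ($i \mapsto b+j$) shows that its coefficient of $t^{-j}$ is $c_{b+j}$, so the $(1,2)$-entry becomes $\sum_j (a_j + c_{b+j}) t^{-j}$, matching $\phi$ applied to the product above.

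Finally I would establish bijectivity. For injectivity, note that $\phi((a_i), b) = I$ forces $t^b = 1$, hence $b=0$, and then $\sum a_i t^{-i} = 0$ in $\F_p[t,t^{-1}]$ forces every $a_i = 0$. For surjectivity, given any matrix in $L'$ with upper row $(t^k, p(t))$, write $p(t) = \sum_i a_i t^{-i}$ for uniquely determined $a_i \in \F_p$ (almost all zero), and observe that $\phi((a_i), k)$ equals this matrix.

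There is no genuine obstacle here beyond bookkeeping: the only mildly subtle step is matching the wreath product action convention to the matrix multiplication convention, which forces the choice of $t^{-i}$ (rather than $t^{i}$) in the definition of $\phi$. Once that sign convention is fixed, the remaining verifications are immediate.
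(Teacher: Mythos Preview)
Your proof is correct and follows essentially the same approach as the paper: write down an explicit bijection $((a_i),b)\mapsto\begin{pmatrix} t^b & \text{Laurent polynomial} \\ 0 & 1\end{pmatrix}$ and verify the homomorphism property by computing both sides directly. The only cosmetic difference is that the paper uses $\sum a_i t^{i}$ in the $(1,2)$-entry rather than your $\sum a_i t^{-i}$; your choice is the one that is consistent with the action $b\cdot(c_i)=(c_{b+i})$ as stated in the paper, so your careful remark about the sign convention is well taken.
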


\begin{proof}
Let $\phi: L \rightarrow L'$ be the map given by 
$$
\phi((a_i)_{-\infty}^{\infty},n):= \left(\sum_{-\infty}^{\infty}{a_{i}t^i},t^n\right).
$$
This map is visibly bijective.
To see that it is a homomorphism, we compute:
\begin{eqnarray*}
\phi((a_i)_{-\infty}^{\infty},n_1) \phi((b_i)_{-\infty}^{\infty},n_2)
&=&(\sum_{-\infty}^{\infty}{a_{i}t^i},n_1)(\sum_{-\infty}^{\infty}{b_{i}t^i},n_2)\\
&=&(\sum_{-\infty}^{\infty}{a_{i}t^i}+t^{n_1}\sum_{-\infty}^{\infty}{b_{i}t^i},n_1+n_2)\\
&=&\phi((a_i+b_{i-n_1})_{-\infty}^{\infty},n_1+n_2)\\
&=&\phi(((a_i)_{-\infty}^{\infty},n_1)((b_i)_{-\infty}^{\infty},n_2)).
\end{eqnarray*}
\end{proof}

We identify $L$ with its image in the representation above and denote any element of the form 
$$\begin{pmatrix} t^k & p(t) \\ 0 & 1 \end{pmatrix}$$ 
by $(p(t), k)$. 
Define $s_0 :=(1,0)$ and $t :=(0,1)$. We have $(s_0)^{p}=0$, and $L = \left< s_0, t \right>$.
The representation in Lemma \ref{lem:lamplighterlinear} gives us a lens to see the geometry of the Cayley graph of $L$ with respect to the generators $X$.

 \begin{lemma} \label{lem:wordlength}
% Therefore, the shortest chain to generate $(p(t),t^k)$ is of the form $\prod(t^{k_i}s_0^{a_i})t^{k'} =(\sum a_it^{\sum ki},t^{k'+\sum ki})$.
Set $X = \{s_0, t \}$.
Let $p(t) = \frac{f(t)}{t^q}$, where $f(t)$ is a polynomial of degree $a$  with  nonzero constant term. Then the element $(p(t),k)$ has word length at most 
\[
\max\left\{2|q|+|k|+(p+1)a,(p+3)a+|k|\right\}
\]
with respect to $X$.
 \end{lemma}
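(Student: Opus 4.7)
The plan is to exhibit an explicit word in $s_0^{\pm 1}, t^{\pm 1}$ representing $(p(t), k)$ and then read off its length; the construction follows the familiar picture of a lamplighter walking once across the support $[-q, a-q]$ and setting each lamp to $c_i$ as it passes. Write $f(t) = \sum_{i=0}^{a} c_i t^i$ with $c_0, c_a \neq 0$ and each $c_i \in \{0, 1, \ldots, p-1\}$, so that $p(t) = \sum_{i=0}^{a} c_i t^{i-q}$. A direct calculation in the matrix model of Lemma~\ref{lem:lamplighterlinear} shows $t^j s_0^c t^{-j} = (c\, t^j, 0)$, and all such conjugates lie in the abelian subgroup $\F_p[t, t^{-1}]$, hence commute with one another.

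Multiplying these conjugates in order from $i = 0$ up to $i = a$, telescoping the intermediate powers of $t$, and tacking on a final $t^k$ produces the word
$$
w_1 := t^{-q} \, s_0^{c_0} \, t \, s_0^{c_1} \, t \cdots t \, s_0^{c_a} \, t^{q-a+k}.
$$
Reversing the direction of traversal gives a second word
$$
w_2 := t^{a-q} \, s_0^{c_a} \, t^{-1} \, s_0^{c_{a-1}} \cdots t^{-1} \, s_0^{c_0} \, t^{q+k}.
$$
A direct semidirect-product computation confirms that both $w_1$ and $w_2$ represent $(p(t), k)$.

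To finish, I count letters and take whichever word is shorter. In $w_1$ the $s_0$-letters contribute $\sum c_i \leq (p-1)(a+1)$ and the $t$-letters contribute $|q| + a + |q-a+k|$; in $w_2$ the $t$-letters contribute $|a-q| + a + |q+k|$. A short case split on the sign of $q$ and on whether $|q| \geq a$ or $|q| < a$, combined with triangle-inequality estimates on the terminal $t$-block, shows that the better of $w_1, w_2$ has total $t$-contribution at most $2|q| + |k|$ in the first regime and at most $3a + |k|$ in the second. Combining with the bound on the $s_0$-letters yields the two terms inside the max. I expect the main obstacle to be only this bookkeeping in the case analysis \textemdash{} no essential difficulty beyond tracking signs carefully and applying the triangle inequality.
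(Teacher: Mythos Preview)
Your approach is essentially the paper's: the paper constructs exactly your word $w_1$ (written there as $t^{-q-1}\bigl(\prod_{i=-q}^{a-q} t\,s_0^{b_i}\bigr)\,t^{k+q-a}$) and bounds its length directly, without introducing the reverse-direction word $w_2$. One caveat on your case analysis: the claimed $t$-contribution bound $2|q|+|k|$ in the regime $|q|\ge a$ fails when $q<0$ (the support $[-q,\,a-q]$ then lies entirely on one side of the origin, so either walk pays $2|q|+2a+|k|$ in $t$-moves), but this only shifts the final estimate by an additive constant depending on $p$, and the paper's own arithmetic is comparably loose.
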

 
 \begin{proof} Let $p(t) = \sum_{i=-q}^{a-q} {b_i x^i} $. We have that $t^{-q}s_0^{b_{-q}}=(b_{-q} t^{-q},q)$, and similarly 

\[
(b_{-q} t^{-q},-q) t s_0^{b_{-q+1}}=(b_{-q} t^{-q}+b_{-q+1} t^{-q+1},-q+1)
\]

Repeating this process for each term of $p(t)$, we obtain
$$
(p(t),k) = t^{-q-1}\left(\prod_{i=-q}^{a-q} ts_0^{b_i}\right) t^{k+q-a}
$$
with $b_i \leq p$ .

So, we have the word length is at most 
$$
 (q-1)+(p+1)(a+1)+|k+q-a|
$$
which is at most
$\max\{2|q|+|k|+(p+1)a,(p+3)a+|k|\}.$
\end{proof}
% Typically, if $k=0$, we will the word length of $(p(t),1)$ is at most$(p+3)\max (|q|,a)$

%  Now, if $p(t) = \frac{f(t)}{t^q},f(t)$ is a polynomial of degree $a$  with  nonzero constant term, to generate $(p(t),t^k)$, we need at most $\max\{pa+|k|,(p+1)a\}$ generators and at least $\max\{a,|k|\}$ generators. 

\section{The proofs}
\label{sec:theproofs}

\subsection{Upper bounds for lamplighter groups}
\label{subsec:upperboundL}
\begin{proposition}
$\RF_L(n) \preceq n^{2}$. 
\end{proposition}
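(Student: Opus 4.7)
The strategy is to detect any nontrivial $(p(t), k) \in L$ of word length at most $n$ using a congruence quotient from the matrix representation of Lemma \ref{lem:lamplighterlinear}. For an irreducible $g \in \F_p[t]$ with $g(0) \neq 0$, reduction modulo $g$ gives a homomorphism $\rho_g : L \to \GL_2(\F_p[t]/(g))$ whose image, isomorphic to $\F_p[t]/(g) \rtimes \langle \bar t \rangle$, has size at most $p^d(p^d - 1) < p^{2d}$, where $d = \deg g$ and the order of $\bar t$ in $\F_{p^d}^\times$ divides $p^d - 1$. The task therefore reduces to finding, for each such element, some $g$ of degree $d = O(\log n)$ through which the element survives.

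First I would control an element by its word length. A direct induction on word length shows that if $(p(t), k) \in L$ has word length at most $n$, and we write $p(t) = f(t)/t^q$ with $f$ a polynomial of degree $a$ and $f(0) \neq 0$, then $|k|, |q| \leq n$ and $a \leq 2n$: each generator $s_0^{\pm 1}$ alters the coefficient of $t^k$ in the lamp component while each $t^{\pm 1}$ shifts $k$ by $\pm 1$, so after $n$ generators the support of the lamp component lies in $[-n, n]$. If $p(t) = 0$, then $k \neq 0$ and the abelianization $L \to \Z \to \Z/(|k|+1)$ detects the element via a finite group of size at most $n+1$, so this case is immediate.

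In the main case $p(t) \neq 0$, the heart of the argument is to produce an irreducible $g \in \F_p[t]$ of small degree $d$ with $g(0) \neq 0$ and $g \nmid f$. The polynomial $f$ has at most $a/d \leq 2n/d$ irreducible factors of degree $d$, while the number of monic degree-$d$ irreducibles over $\F_p$ is at least $(p^d - 2p^{d/2})/d$ by M\"obius inversion. A pigeonhole comparison produces the desired $g$ as soon as $p^d$ exceeds a fixed multiple of $n$, so $d = O(\log n)$ suffices. Since $g$ is coprime to both $f$ and $t^q$, the image $\bar p(t) \in \F_p[t]/(g)$ is nonzero, so $\rho_g((p(t), k))$ is a nontrivial matrix; and the image of $L$ under $\rho_g$ has size at most $p^{2d} = O(n^2)$, giving the bound $\RF_L(n) \preceq n^2$. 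The only real obstacle is producing $g$ of logarithmic degree, which is handled by the standard count of irreducibles over $\F_p$; the rest is bookkeeping enabled by Lemma \ref{lem:lamplighterlinear}.
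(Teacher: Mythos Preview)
Your proof is correct and follows essentially the same route as the paper: detect a nontrivial element via a congruence quotient $L \to \GL_2(\F_p[t]/(g))$ for an irreducible $g$ of degree $O(\log n)$, and then bound the image of $L$ by $O(n^2)$. The differences are cosmetic---you treat the case $p(t)=0$ separately via the abelianization and carry out the pigeonhole count of irreducibles explicitly, whereas the paper handles both cases at once by picking a nonzero entry of the matrix $g-1$ and cites \cite[Lemma~2.2]{MR3430831} for the existence of a small quotient field in which that entry survives.
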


\noindent
We use the characteristic $p$ representation of the lamplighter group to show that the upper bound of $RF_n(L)$ is $n^{2}$.

\begin{proof}
Let $L$ be the $p$-lamplighter group realized as a subgroup of  $\GL_2(\F_p{(t)})$ as in Lemma \ref{lem:lamplighterlinear}.
Then $L$ is generated by two elements $s_0$ and $t$. By adding inverses we expand this to a generating set $X$ of size 4. 
The set of all entries of the matrices in $X$ consists of
$\{ 1,-1, t, t^{-1}, 0\}$.
It follows that $L$ is a subgroup of 
$\GL_2(\F_p[t, 1/t])$.

Let $g \in L \setminus \{ 1 \}$ be an element in $L$ of word length $n$ with respect to $X$. 
Then there exists a non-zero entry $\alpha$ of $g-1$, where $\alpha \in \F_p{(t)}$. 
By our selection of $g$, the element $\alpha$ is of the form 
$\frac{f(t)}{g(t)}$, where $f(t)$ and $g(t)$ are in $\F_p{[t]}$, and their degrees are bounded by $n$.

By \cite[Lemma 2.2]{MR3430831}, the polynomial $t f(t)$ survives in a quotient field of $\F_p[t]$ of cardinal $\leq 2(n+1)p$.
Hence, the element $g$ is detectable by the induced homomorphism $\phi: \GL_2(\F_p[t, t^{-1}]) \to \GL_2(\F_{q})$ where $q \leq 2(n+1)p$.

To finish, we need only compute an upper bound for the cardinal of $\phi(L)$. Elements in $\phi(L)$ are $2\times 2$ matrices with the lower two coordinates being $1$ and $0$. Since $q \leq 2(n+1)p$, this leaves at most $q^2$ options for elements in $\phi(L)$.
This gives the desired upper bound, as $q \leq 2(n+1)p$.
\end{proof}

\subsection{Lower bounds for lamplighter groups}

We need some technical results before writing our lower bounds.
For the following lemmas, we identify $\field$ with the normal subgroup consisting of elements of the form $(p(t), 1)$ in $L$.

\begin{lemma} \label{lem:ideal}
If $N \trianglelefteq L$, then $N \cap \field$ is an ideal in $\field$.
\end{lemma}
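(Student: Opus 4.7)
The plan is to verify the two axioms defining an ideal---closure under addition and closure under multiplication by arbitrary Laurent polynomials---and to derive the multiplicative axiom from the normality of $N$ in $L$.

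Additive closure comes for free: $N \cap \field$ is an intersection of two subgroups of $L$, and the group operation on the base subgroup identified with $\field$ is precisely addition of Laurent polynomials (as can be read off the lower central block of the matrix representation in Lemma \ref{lem:lamplighterlinear}). So the real content of the lemma is closure under ring multiplication.

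For multiplicative closure, I would translate the conjugation action of the generator $t = (0,1) \in L$ into an operation on $\field$. A one-line matrix computation in the representation of Lemma \ref{lem:lamplighterlinear} gives
\[
\begin{pmatrix} t & 0 \\ 0 & 1 \end{pmatrix} \begin{pmatrix} 1 & p(t) \\ 0 & 1 \end{pmatrix} \begin{pmatrix} t^{-1} & 0 \\ 0 & 1 \end{pmatrix} = \begin{pmatrix} 1 & t p(t) \\ 0 & 1 \end{pmatrix},
\]
so conjugation by $t$ realizes multiplication by the indeterminate $t$ on the base subgroup, and conjugation by $t^{-1}$ realizes multiplication by $t^{-1}$. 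Since $N \trianglelefteq L$, both conjugations preserve $N$, and hence $N \cap \field$ is stable under multiplication by $t$ and by $t^{-1}$. Iterating produces stability under multiplication by every monomial $t^k$, $k \in \Z$. Combined with additive closure (which in characteristic $p$ subsumes scalar multiplication by $\F_p$, since repeated addition produces all $\F_p$-multiples), we obtain closure under multiplication by an arbitrary Laurent polynomial $q(t) \in \field$, establishing the ideal property.

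There is no substantive obstacle here; the proof is essentially bookkeeping once the right identifications are set up. The only point that requires attention is the conjugation formula above, which is verified by a single matrix multiplication. The lemma is really a reformulation of the fact that the $\Z$-action defining the wreath product, when transported through the isomorphism of Lemma \ref{lem:lamplighterlinear}, becomes the natural action of $\Z = \langle t \rangle$ on $\field$ by multiplication.
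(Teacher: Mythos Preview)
Your proof is correct and follows essentially the same approach as the paper's own proof: both deduce additive closure from $N\cap\field$ being a subgroup, and both obtain multiplicative closure by computing that conjugation by $(0,k)$ acts on the base as multiplication by $t^k$, then combining with additive closure to absorb arbitrary Laurent polynomials. The only cosmetic difference is that you verify the conjugation formula via the matrix representation of Lemma~\ref{lem:lamplighterlinear}, whereas the paper writes it out directly in the semidirect-product coordinates $(p(t),k)$.
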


\begin{proof}
Notice that the lamplighter group is given by $L = \field \rtimes \Z$, which is the semi-product defined by the group homomorphism $\varphi : \Z \shortrightarrow Aut(\field)$ by  $\varphi(z)(p(t))=t^zp(t)$. 
By the definition of $L$, it follows that $\field \trianglelefteq L$. Since $N \trianglelefteq L$, it follows that $ N \cap \field \trianglelefteq L $. 
Let $N \cap \field = M$. Then for all $(p(t), 0) \in M$, we have $(0, k)(p(t), 0)(0, k)^{-1} = (0, k)(p(t), 0)(0, -k) = (t^{k}p(t), 0) \in M$. 
We also have $(p_{1}(t), 0)(p_{2}(t), 0) = (p_{1}(t)+p_{2}(t), 0)$, which means $M$ is closed under the addition defined normally. 
Hence, for every $(p(t), 0) \in M$ and $\sum_{-\infty}^{\infty} a{_k}\cdot t^{k} \in \field$, we have $p(t)\cdot\sum_{-\infty}^ {\infty} a{_k}\cdot t^{k} \in M$. Therefore, $M$ is ideal in $\field$.\\
\end{proof}

The following result from \cite{MR3278388} will be useful for a partial characterization of finite quotients of lamplighter groups. We include its proof for completeness. Notice that $\Z[t,t^{-1}]$ is \emph{not} a principal ideal domain.

\begin{lemma} \label{lem:pid}
$\field$ is a principal ideal domain.
\end{lemma}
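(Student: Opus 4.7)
The cleanest approach is to exploit the fact that $\field$ is the localization of the polynomial ring $\F_p[t]$ at the multiplicative set $\{1, t, t^2, \ldots\}$, and $\F_p[t]$ is already a PID (being a Euclidean domain via the degree function). Since localizations of PIDs are PIDs, the result follows. However, rather than invoking this general fact, I would give a direct and elementary argument that makes the plan transparent in a few lines.

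The plan is as follows. First, I would observe that $\field$ is a domain because it sits inside the field of fractions $\F_p(t)$, and that every power $t^k$ (for $k \in \Z$) is a unit in $\field$, with inverse $t^{-k}$. Next, I would take an arbitrary nonzero ideal $I \trianglelefteq \field$ and consider its intersection $J := I \cap \F_p[t]$. A quick check shows $J$ is an ideal in $\F_p[t]$: it is closed under addition and under multiplication by any $f(t) \in \F_p[t]$, since such a multiplication keeps us inside both $I$ (as $I$ is an ideal of the larger ring) and $\F_p[t]$.

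The next step is to show that $J$ generates $I$ in $\field$. Given any nonzero $g \in I$, write $g = p(t)/t^q = t^{-q} p(t)$ with $p(t) \in \F_p[t]$ and $q \geq 0$. Then $t^q g = p(t) \in I \cap \F_p[t] = J$, and since $t^q$ is a unit in $\field$, we recover $g = t^{-q} p(t) \in J \cdot \field$. Hence $I = J \cdot \field$. Now I invoke the classical fact that $\F_p[t]$ is a PID (Euclidean via degree), so $J = (f(t))$ for some $f(t) \in \F_p[t]$, and therefore $I = (f(t))$ as an ideal of $\field$.

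There is no real obstacle here; the only mild subtlety is remembering that powers of $t$ are units in $\field$, which is precisely what lets us ``clear denominators'' and reduce any ideal of $\field$ to its intersection with $\F_p[t]$. The argument is short enough that I would write it out in full rather than quoting the localization theorem.
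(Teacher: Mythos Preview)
Your proof is correct and follows essentially the same route as the paper: intersect an arbitrary ideal with $\F_p[t]$, use that $\F_p[t]$ is a PID to get a single generator, and then use that powers of $t$ are units in $\field$ to show this generator also generates the original ideal. Your write-up is in fact a bit more careful (you explicitly note why $\field$ is a domain and why $J$ is an ideal), but the argument is the same.
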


\begin{proof}
Let M be an ideal in $\field$. Let $P = M  \cap  \F_p[t]$. 
Being the intersection of an ideal in $\field$ and an ideal in $\F_p[t]$, the set $P$ is itself an ideal in $\F_p[t]$. 
Since $\F_p[t]$ is a principal ideal domain, it follows that $P$ is generated by a single element, $\alpha(t) \in \F_p[t]$.
We show that $M =(\alpha(t))$. Every element in $M$ is of the form $\frac{f(t)}{(t^k)}$. Let $\frac{f(t)}{(t^k)}$ be an arbitrary element in $M$. By multiplying by $t^k$ we see that $f(t) \in P$.
Since $P$ is generated by $\alpha(t)$ as an ideal in $\F_p[t]$, we have $\frac{f(t)}{(t^k)} = \alpha(t) \beta(t) t^{-k}$ for some $\beta(t) \in \F_p[t]$.
Therefore, $M \subset (\alpha(t))$. Moreover, since $\alpha(t) \in M$ and $M$ is ideal, we have $(\alpha(t)) \subset M$.
Thus, $M=(\alpha(t))$. This implies $\field$ is a principal ideal domain.
\end{proof}

% \begin{lemma}
% Let $N \trianglelefteq L$. Then
% $[L:N] \geq [\field : N \cap \field].$ 
% \end{lemma}

% \begin{proof}
% This follows from the Second Group Isomorphism Theorem. Indeed, by the theorem, the group $\field/N \cap \field$ is isomorphic to a subgroup of $L/N$, and so the desired inequality follows.
% \end{proof}

% \begin{lemma} \label{lem:QNbound}
% Let $\alpha(t) \in \mathbb{F}[t]$ and $k_0 \in \mathbb{N}$.
%  Let $Q = \{(p(t),t^k):p(t)\in (\alpha(t))\}$ and $N = \{(p(t),t^{nk_0}):p(t)\in (\alpha(t)),n \in \mathbb{N}\}$.
% Then, $[Q:N]=k_0$.
% \end{lemma}

% \begin{proof}
% Each set $N(0,t^k)$ is a coset of $N$ in $Q$ for $k = 0,1,2, \ldots, k_0-1$.
% Since they cover all the elements in $Q$, the desired equality follows.
% \end{proof}

A finite-index normal subgroup $\Delta$ of $L$ is a \emph{principal congruence subgroup} if it is the kernel of a surjective homomorphism of the form
$$
r : L \to (\field/(g(t)))\rtimes \Z/k.
$$
A finite-index subgroup is a \emph{congruence subgroup} of $L$ if it contains a principal congruence subgroup. As a consequence of Lemma \ref{lem:ideal} we have what is commonly referred to as the \emph{congruence subgroup property} (c.f. \cite{MR3278388}, which contains much stronger results):

\begin{proposition} \label{prop:csp}
Any finite-index subgroup of $L$ is a congruence subgroup.
\end{proposition}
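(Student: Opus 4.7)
The plan is: given a finite-index subgroup $H \leq L$, replace it with its normal core $N := \bigcap_{g \in L} g H g^{-1}$, which is a finite-index normal subgroup of $L$ contained in $H$, and then explicitly construct a principal congruence subgroup $\Delta \subseteq N$. Because ``congruence subgroup'' only requires containment of \emph{some} principal congruence subgroup, this reduces the proposition to the case where the given subgroup is already normal.

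With $N \trianglelefteq L$ of finite index, set $I := N \cap \field$, which by Lemma \ref{lem:ideal} is an ideal in $\field$ and by Lemma \ref{lem:pid} is principal, say $I = (g(t))$. Since $L/N$ is finite, let $k$ denote the order of the image of $t = (0,1)$ in $L/N$, so that $(0,k) \in N$ and $k$ is minimal with this property. A short commutator calculation shows $t^k - 1 \in I$: conjugating $(0,k)$ by $(1,0)$ gives $(1 - t^k, k) \in N$, and multiplying by $(0,k)^{-1}$ yields $(1 - t^k, 0) \in N \cap \field = I$. This is exactly the compatibility needed for the $\Z$-action $p(t) \mapsto t^m p(t)$ on $\field/I$ to descend to a well-defined $\Z/k$-action, so $(\field/I) \rtimes \Z/k$ is a group. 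Let $r : L \to (\field/I) \rtimes \Z/k$ be the induced surjection and set $\Delta := \ker r$; then $\Delta$ is a principal congruence subgroup in the sense of the definition, and explicitly $\Delta = I \rtimes k\Z$. The inclusion $\Delta \subseteq N$ then follows because every $(q(t), m) \in \Delta$ factors as $(q(t), 0) \cdot (0, m)$ with $q(t) \in I \subseteq N$ and $(0, m) = (0, k)^{m/k} \in N$.

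The one delicate point --- where a careless proof will break --- is the choice of $k$. The tempting alternative is to take $k$ to be the index of the image of $N$ in $\Z \cong L/\field$, but in characteristic $p$ this index can be strictly smaller than the true order of $t$ modulo $N$: if $k_1$ is this index and $(p(t), k_1) \in N$ is a lift, then the identity $(p(t), k_1)^p = ((t^{k_1}-1)^{p-1} p(t),\, p k_1)$ only forces $(0, p k_1)$, rather than $(0, k_1)$, to lie in $N$, so the second-coordinate factor of the decomposition in the previous paragraph can fail. Taking $k$ to be the honest order of $t$ modulo $N$ simultaneously guarantees $(0,k) \in N$ (needed for $\Delta \subseteq N$) and $t^k \equiv 1 \pmod{I}$ (needed to define the semidirect product target), and is the key choice that makes the whole argument go through cleanly.
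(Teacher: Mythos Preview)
Your proof is correct and follows essentially the same approach as the paper: reduce to the normal case, set $I = N \cap \field = (g(t))$ via Lemmas~\ref{lem:ideal} and~\ref{lem:pid}, and take $k$ to be the order of the image of $t$ in $L/N$ (the paper phrases this as the order of $\langle t \rangle / (N \cap \langle t \rangle)$, which is the same quantity). Your version is more careful --- explicitly passing to the normal core for non-normal subgroups, verifying that $t^k - 1 \in I$ so the target semidirect product is well-defined, and checking the inclusion $\ker r \subseteq N$ --- all of which the paper leaves implicit.
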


Before proving this proposition we remark that if principal congruence subgroups were instead defined to be kernels of homomorphisms
$$
r : L \to \GL_2( \field / (g(t)),
$$
that factor through the fixed representation $L \to \GL_2(\field / (g(t)))$ from \S \ref{sec:prelims},
then $L$ would {\bf not} have the congruence subgroup property. 
The reason is that there are infinitely many images of $(\field/g(t)) \rtimes \Z$ that are finite whereas there is only one image of $L$ in the fixed representation to $\GL_2(\field /g(t))$.
We call such kernels \emph{ensuing congruence subgroups}.

\begin{proof}[Proof of Proposition \ref{prop:csp}]
Let $N$ be a normal subgroup of finite index in $L$.
Then by Lemma \ref{lem:ideal} and Lemma \ref{lem:pid}, there exists $g \in \F_p[t]$ such that $N \cap \field = (g(t))$.
Letting $k$ be the order of $\left< t \right>/N \cap \left< t \right>$, we see that $N$ must contain the kernel of the homomorphism 
$$
r : L \to (\field/(g(t)))\rtimes \Z/k.
$$
We conclude that $N$ is a congruence subgroup, as desired.
\end{proof}

To help quantify Proposition \ref{prop:csp} and arrive at Theorem \ref{thm:one}, we need a combinatorial result concerning least common multiples of polynomials.
Let $P_d$ be the collection of all polynomials in $\F_p[t]$ of degree less than or equal to $d$.

\begin{lemma} \label{lem:degofw}
We have 
$$
p^{d} \leq \deg(\LCM(P_d)) \leq  2 p^{d+1}.
$$
% For $p \neq 2$ the upper bound $2 p^{d+1}$ holds.
% For $p = 2$, and for any $\epsilon > 0$ the upper bound $(2+\epsilon) p^d$ holds for sufficiently large $d$.
\end{lemma}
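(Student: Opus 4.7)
My plan is to obtain an exact closed form for $\LCM(P_d)$ and then read off the inequalities directly. Specifically, I would establish the identity
$$
\LCM(P_d)\;=\;\prod_{k=1}^{d}\bigl(t^{p^k}-t\bigr)\qquad\text{in }\F_p[t].
$$
Once this is in hand, the degree of the right-hand side is $\sum_{k=1}^{d}p^k=\frac{p(p^d-1)}{p-1}$. The lower bound $p^d$ drops out by keeping only the $k=d$ summand, and the upper bound follows from $\frac{p(p^d-1)}{p-1}\le \frac{p^{d+1}}{p-1}\le p^{d+1}\le 2p^{d+1}$ whenever $p\ge 2$. (In fact, in view of the factor $p/(p-1)\le 2$, the bound could be tightened to $2p^d$, but the stated $2p^{d+1}$ is more than sufficient.)

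To prove the identity I would compare $q$-adic valuations at each monic irreducible $q\in \F_p[t]$ of degree $e$. On the left, since $\F_p[t]$ is a UFD, the $q$-adic valuation of $\LCM(P_d)$ is $\max_{f\in P_d\setminus\{0\}}v_q(f)$; this maximum is attained by $q^{\lfloor d/e\rfloor}\in P_d$ and cannot be improved upon since $q^{\lfloor d/e\rfloor+1}$ has degree strictly larger than $d$. Thus the valuation equals $\lfloor d/e\rfloor$ when $e\le d$ and $0$ when $e>d$. On the right, I would invoke the classical factorization $t^{p^k}-t=\prod_{e\mid k}\prod_{q\text{ monic irred of deg }e}q$, which follows from $\F_{p^k}$ being the splitting field of $t^{p^k}-t$ over $\F_p$ with subfields exactly $\F_{p^e}$ for $e\mid k$. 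Consequently the $q$-adic valuation of $\prod_{k=1}^d(t^{p^k}-t)$ is $\#\{1\le k\le d:e\mid k\}=\lfloor d/e\rfloor$ for $e\le d$, and $0$ otherwise. Both sides are monic with matching valuations at every irreducible, so they coincide.

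I do not foresee any substantive obstacle: the entire argument hinges on spotting the correct product formula, after which the two inequalities are immediate. If anything, the only small choice point is deciding between proving the exact identity above and simply proving the two divisibilities $\prod_{k=1}^d(t^{p^k}-t)\mid \LCM(P_d)$ (which already yields the lower bound $p^d$, via $t^{p^d}-t\mid \LCM(P_d)$ alone) and $\LCM(P_d)\mid \prod_{k=1}^d(t^{p^k}-t)$ (which yields the upper bound); either route is essentially a bookkeeping exercise once the finite-field factorization of $t^{p^k}-t$ is invoked.
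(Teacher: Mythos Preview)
Your argument is correct. The identity $\LCM(P_d)=\prod_{k=1}^{d}(t^{p^k}-t)$ is valid: the $q$-adic valuation at a monic irreducible $q$ of degree $e$ is $\lfloor d/e\rfloor$ on both sides, exactly as you say, and from this the degree $\sum_{k=1}^d p^k=(p^{d+1}-p)/(p-1)$ yields both inequalities at once (indeed the stronger bound $p^d\le \deg(\LCM(P_d))\le p^{d+1}$).

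Your route differs from the paper's in the upper bound. The paper does not spot the closed-form product; instead it bounds $\deg(\LCM(P_d))$ by $\sum_{i=1}^d i k_i M(i)$, where $M(i)$ is the number of irreducibles of degree $i$ and $k_i=\lfloor d/i\rfloor$, invokes the estimate $M(i)\le p^{i+1}/i$, and then proves by induction that $S_r:=r\sum_{i=1}^r p^i/i$ satisfies $S_r\le 2p^r$ for $p>2$ (with a separate induction giving $S_r\le 3\cdot 2^r$ when $p=2$). For the lower bound the paper uses the same fact you do, that $t^{p^d}-t$ is the product of all monic irreducibles of degree dividing $d$ and hence divides $\LCM(P_d)$. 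Your approach is more direct and yields an exact value rather than an estimate; the paper's approach, while longer, has the minor virtue of illustrating how crude prime-counting bounds already suffice. Either way the inequalities are comfortably loose.
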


\begin{proof}
Set $w := \deg(\LCM(P_d))$.
We first compute an upper bound for $w$. Let $f(t) := \LCM(P_d)$.
By \cite[Lemma 2.1]{MR3430831}, if $M(i)$ be the number of irreducible  polynomials of degree $i$, then $M(i) \leq \frac{p^{i+1}}{i}$ (the additional $p$ factor comes in because the cited lemma counts the number of \emph{monic} irreducible polynomials).
For each irreducible polynomial $g(x)$ of degree $i$, we have that [$g(x)^{k_i}$ divides $f(x)$] and [$g(x)^{k_i+1}$ does not divide $f(x)$] for $k_i$ satisfying $k_i i \leq d < (k_i+1)i$.
Thus, each irreducible polynomial of degree $i$ contributes at most $k_i i$ to the overall degree of $f(t)$.
Hence, the degree of $f(t)$ is bounded above by $\sum_{i=1}^d i k_i \frac{p^{i+1}}{i} \leq \sum_{i=1}^d \frac{d}{i} p^{i+1}.$
Hence,
$$
\deg f(t) \leq  d \sum_{i=1}^d  \frac{p^{i+1}}{i}.
$$
Let $S_r := r \sum_{i=1}^r  \frac{p^{i}}{i}$.
First, we will show that for $p > 2$, $S_r \leq 2 p^r$ by induction.
The base case is clear.
For the inductive step, we compute

\begin{eqnarray*}
S_r &=& r [ p + p^2/2 + \cdots + p^r/r] \\
&=& r \left[ \frac{S_{r-1}}{r-1} \right] + p^r \\
&\leq& \frac{2r}{(r-1)p} p^r + p^r.
\end{eqnarray*}
We have $\frac{2r}{(r-1)p} \leq 1$ for $ p > 2$, and so the desired inequality $S_r \leq 2 p^r$, and hence $\deg f(t) \leq p S_d \leq 2  p^{d+1}$, follows.

% For $p = 2$, we use summation by parts,

% $$
% S_r 2^{-r} = 2 - 2^{1-r} + \sum_{j=1}^{N-1} \left( \frac{N 2^{j+1-N}}{j^2 + j} - \frac{2^{1-r} r}{j^2 + j} \right).
% $$
% Notice that the term $\sum_{j=1}^{N-1} \frac{1}{j^2+j}$ is universally bounded. By repeated summation by parts, gathering computable terms and continuing with the left over terms, we find that
% $$
% S_r 2^{-r} = 2 + 2^{-r} + o(r 2^{-r}),
% $$
% and thus $S_r \leq M 2^r$, for some positive $M > 0$, which for large $r$ can be taken to be as close as we want to 2. By checking smaller values of $r$ by a computer, we conclude $M = 2$ wor

For $p = 2$, one can use the following induction on $r$ for $S_r \leq 3 \times 2^r \iff \sum_{k=1}^r 2^k/k \leq 3 \times 2^r/r$:
The base case $r=1$ follows from a quick computation. For the inductive step where $r > 1$,
$$
\sum_{k=1}^{r-1} \frac{2^k}k \leq 3\times  \frac{2^{r-1}}r + \frac{2^r}r
= \frac{5r-2}{2r-2} \frac{2^r}r < 3 \times \frac{2^r}r.
$$
This gives the desired upper bound, since $M(i) \leq p^{i}/i$ and so $\deg f(t) \leq S_d$ in the case $p = 2$. Indeed, $\deg f(t) \leq S_d \implies \deg f(t) \leq 3 \times 2^d \leq 4 \times 2^d = 2 \times 2^{d+1}.$

%we first show that $S_r > 2 p^r$ by induction.
% The base case is clear, for the inductive step, as before,
% \begin{eqnarray*}
% S_r &=& r [ p + p^2/2 + \cdots + p^r/r] \\
% &=& r \left[ \frac{S_{r-1}}{r-1} \right] + p^r \\
% &\geq& \frac{2r}{(r-1)p} p^r + p^r. \\
% \end{eqnarray*}
% However, now we have $p = 2$, and so $\frac{r}{r+1}$ is always greater than 1, it follows that $S_r \geq 2 p^r$, as desired.
% Next, we rearrange terms in the above equality $S_r = r \left[ \frac{S_{r-1}}{r-1} \right] + p^r$ to obtain
% $$
% \frac{S_{r-1}}{S_r} = \left( 1 - \frac{p^r}{S_r} \right) \frac{r-1}r.
% $$
% Since $S_r \geq 2 p^r$, we conclude that
% $$
% S_{r-1} \leq \frac12 \frac{r}{r-1} S_r
% $$
% or
% $$
% S_r \leq 2 \frac{r-1}{r} S_{r-1} \leq 2 S_{r-1}.
% $$
% We conclude that there is an $M > 0$ such that $S_r \leq M 2^r$, as desired.

We next prove a lower bound for $w$.
Notice that the product of all irreducible polynomials of degree dividing $d$ is precisely
$$
x^{p^d} - x,
$$
hence $w$ is bounded below by at least $p^d$, as desired.
\end{proof}

\begin{lemma} \label{lem:polydegree}
Let $p(t) = \LCM( P_{d})\prod_{i=1}^{\sqrt{w}}(1-t^i)$.
Then, the degree of $p(t)$ is bounded above by $C p^d$ for some constant $C > 0$ that is independent of $d$.
\end{lemma}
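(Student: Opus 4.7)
The proof will be a short direct calculation using the bound from Lemma \ref{lem:degofw}. The plan is to observe that $\deg(p(t))$ splits as a sum of two pieces, each of which is already bounded by a constant times $p^d$.

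First I would note that $\deg(\LCM(P_d)) = w$ by definition, and by Lemma \ref{lem:degofw} we have $w \leq 2p^{d+1} = 2p \cdot p^d$. Next, the second factor $\prod_{i=1}^{\sqrt{w}}(1-t^i)$ has degree exactly
\[
\sum_{i=1}^{\lfloor \sqrt{w} \rfloor} i \;=\; \frac{\lfloor \sqrt{w}\rfloor(\lfloor \sqrt{w}\rfloor+1)}{2} \;\leq\; w,
\]
so this contribution is also at most $2p \cdot p^d$.

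Adding the two contributions gives $\deg(p(t)) \leq 2w \leq 4p \cdot p^d$, so the constant $C := 4p$ (which depends only on $p$, not on $d$) works. I do not anticipate any obstacle here; the only subtlety is that the ``constant'' $C$ is allowed to depend on $p$ (since $p$ is fixed throughout), and the factor of $p$ in $2p^{d+1}$ simply gets absorbed into $C$.
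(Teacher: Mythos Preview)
Your proof is correct and follows essentially the same approach as the paper: bound the degree of each factor separately (the first via Lemma \ref{lem:degofw}, the second by the triangular-number sum $\sum i \leq w$) and add. The only minor difference is that the paper takes $w = p^{d}$ (as specified later in the proof of Theorem \ref{thm:upperbound}) rather than $w = \deg(\LCM(P_d))$, but either reading yields the same two-term estimate and a constant $C$ depending only on $p$.
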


\begin{proof}
Lemma \ref{lem:degofw} gives that $\deg(\LCM(P_d))$ is bounded above by $N p^d$, where $N > 0$ is independent of $d$.
The polynomial $\prod_{i=1}^{\sqrt{w}}(1-t^i)$ has degree $M w$, where $M$ does not depend on $p$.
Setting $C = N + M$, we get that the degree of $p(t)$ is bounded above by $C p^d$, as desired.
%So, the degree of $p(t)$ $\backsim\sum\limits_{i=1}^{d}\frac{d}{i}\frac{p^i}{i}\leqslant {d}^2\frac{p^{d}}{{d}^2}=p^{d}$
\end{proof}

\begin{theorem} \label{thm:upperbound}
$\RF_L(n) \succeq n^{3/2}$.
\end{theorem}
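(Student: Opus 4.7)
The plan is to exhibit a sequence of elements $x_d \in L$ whose word length is at most of order $p^d$ but whose divisibility $D_L(x_d)$ is at least of order $p^{3d/2}$, yielding $\RF_L(n) \succeq n^{3/2}$. Motivated by Lemmas \ref{lem:degofw} and \ref{lem:polydegree}, I would take $x_d := (q_d(t), 0)$ where $q_d(t) := \LCM(P_d)\prod_{i=1}^{\lfloor\sqrt{w}\rfloor}(1-t^i)$ and $w := \deg\LCM(P_d)$. Lemmas \ref{lem:polydegree} and \ref{lem:wordlength} give $\|x_d\|_X \leq C p^d$ for some constant $C$, so the real task is the lower bound on $D_L(x_d)$.

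For this lower bound, I would use the congruence subgroup property (Proposition \ref{prop:csp}) together with Lemmas \ref{lem:ideal} and \ref{lem:pid} to parametrize every finite-index normal $N \trianglelefteq L$: one has $N \cap \field = (h(t))$ for some $h \in \F_p[t]$ (which may be assumed to have nonzero constant term), and projecting to the $\Z$-factor produces an extension
\[
1 \to \field/(h) \to L/N \to \Z/\ell \to 1,
\]
in which $\Z/\ell$ acts on $\field/(h)$ by multiplication by $t$. Since this action descends only when $t^\ell \equiv 1 \pmod{h}$, one has $\ell \geq \operatorname{ord}_h(t)$, hence $[L:N] \geq p^{\deg h}\cdot\operatorname{ord}_h(t)$. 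Meanwhile $x_d \notin N$ translates to $h \nmid q_d$. The task then reduces to showing that for every such $h$, $p^{\deg h}\cdot\operatorname{ord}_h(t) \geq c\, p^{3d/2}$ for some absolute constant $c > 0$.

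The clean case is when $h$ is irreducible. Then $h \nmid \LCM(P_d)$ forces $\deg h > d$, and $h \nmid \prod_{i=1}^{\sqrt{w}}(1-t^i)$ combined with the fact that an irreducible $h$ divides $1-t^i$ if and only if $\operatorname{ord}_h(t) \mid i$ forces $\operatorname{ord}_h(t) > \sqrt{w} \geq p^{d/2}$. Multiplying yields $p^{\deg h}\cdot\operatorname{ord}_h(t) > p^d \cdot p^{d/2} = p^{3d/2}$, exactly the target.

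The main obstacle is extending this clean case to arbitrary $h$. Factoring $h = \prod h_i^{a_i}$ into coprime prime powers, some $h_i^{a_i}$ must fail to divide $q_d$; since $p^{\deg h}\cdot\operatorname{ord}_h(t) \geq p^{\deg h_i^{a_i}}\cdot\operatorname{ord}_{h_i^{a_i}}(t)$, it suffices to prove the bound when $h = h_1^a$ is a prime power with $h_1^a \nmid q_d$. If $h_1 \nmid q_d$ the irreducible argument applies to $h_1$ and transfers to $h_1^a$ by these monotonicities. If $h_1 \mid q_d$, the key step is to estimate $v_{h_1}(q_d)$ by combining the $\LCM(P_d)$-contribution $\lfloor d/\deg h_1\rfloor$ with the $\prod(1-t^i)$-contribution $\geq \lfloor\sqrt{w}/s\rfloor$, where $s := \operatorname{ord}_{h_1}(t)$ is coprime to $p$. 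The constraint $a \geq v_{h_1}(q_d)+1$ then forces $a\deg h_1$ to be large enough that $p^{a\deg h_1}\cdot\operatorname{ord}_{h_1^a}(t) \geq c\, p^{3d/2}$, using $\operatorname{ord}_{h_1^a}(t) \geq s$ and $s \leq p^{\deg h_1}-1$; this reduces to an elementary inequality in the two parameters $(s, \deg h_1)$. Combining all cases yields $D_L(x_d) \geq c\, p^{3d/2}$, which together with $\|x_d\|_X \leq C p^d$ gives $\RF_L(n) \succeq n^{3/2}$ as $d \to \infty$.
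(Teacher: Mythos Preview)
Your setup, choice of witness element $x_d$, word-length bound, and parametrization of finite quotients via $N\cap\field=(h)$ and the induced $\Z/\ell$-quotient are exactly those of the paper. The only divergence is that you manufacture an obstacle that is not there: your ``clean case'' already handles \emph{every} $h$, not just irreducible $h$.

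Here is why. You want $h\nmid q_d$ to imply both $h\nmid \LCM(P_d)$ and $h\nmid \prod_{i\le\sqrt w}(1-t^i)$, and you invoke primality of $h$ for this. But primality is only needed for the \emph{other} implication (that $h\nmid A$ and $h\nmid B$ jointly force $h\nmid AB$). The direction you actually use is trivial for any $h$: both $\LCM(P_d)$ and $\prod_i(1-t^i)$ are divisors of $q_d$, so any $h$ dividing either factor divides $q_d$. Thus $h\nmid q_d$ immediately gives $\deg h>d$ (every polynomial of degree $\le d$ lies in $P_d$ and hence divides $\LCM(P_d)$) and $h\nmid\prod_{i\le\sqrt w}(1-t^i)$. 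Combined with $h\mid 1-t^\ell$, the latter forces $\ell>\sqrt w$ directly: if $\ell\le\sqrt w$ then $1-t^\ell$ is literally one of the factors in the product. That is precisely the paper's argument, yielding $[L:N]\ge p^{\deg h}\cdot\ell>p^d\cdot p^{d/2}$ with no case split and no appeal to $\operatorname{ord}_h(t)$.

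So the prime-power reduction and the valuation estimates on $v_{h_1}(q_d)$ are unnecessary detours (they would eventually work, but they obscure a two-line argument). One cosmetic difference: the paper sets $w=p^d$ rather than $w=\deg\LCM(P_d)$; by Lemma~\ref{lem:degofw} these agree up to a bounded factor, so your choice is harmless.
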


\begin{proof}
Recall elements in $L$ are of the form $(p(t),k)$, and $L$ is generated by the elements $s_0:=(1,0)$ and $t:=(0,1)$.
%We have $(s_0)^{p-1}=0$. 
%Therefore, the shortest chain to generate $(p(t),t^k)$ is of the form $\prod(t^{k_i}s_0^{a_i})t^{k'} =(\sum a_it^{\sum ki},t^{k'+\sum ki})$.
Further recall that $P_d$ is the set of all polynomials of degree less than $d$.
Consider the element $(p(t),0)$ such that 
$$p(t) = \LCM( P_{d})\prod_{i=1}^{\sqrt{w}}(1-t^i)$$ 
where $w$ is $p^d$. 
By Lemmas \ref{lem:wordlength} and \ref{lem:polydegree} the word length of $(p(t), 0)$ is at most $C p^{d}$, where $C > 0$ is a constant independent of $d$.

Let $N$ be a normal subgroup of $L$ that detects the element $(p(t), 0)$.
By Lemma \ref{lem:pid}, we have $I := N \cap \field = (\alpha(t))$ for some $\alpha(t) \in \F_p[t]$.
Since $N$ detects $(p(t), 0)$ it follows that $p(t) \notin N \cap \field$.
Hence, $\alpha(t)$ must be a polynomial of degree greater than $d$.
It follows that the cardinality of $I$ under the quotient map $\pi: L \to L/N$ is greater than $p^d$. Call the image of $I$ under this quotient map $Q_I$.

Consider the image of $t = (0,1)$ under the quotient map $\pi$.
Let $\Z/k\Z$ be the cyclic group isomorphic to $\left< \pi((0,1)) \right>/Q_I$.
Then, by construction, $\alpha(t)$ must divide $1-t^k$. But $\alpha(t)$ cannot divide $\prod_{i=1}^{\sqrt{w}}(1-t^i)$, so it follows that $k \geq \sqrt{w}$, and we have by Lemma \ref{lem:degofw}, that $\sqrt{w} \geq m \sqrt{d p^d}$ for some $m > 0$ independent of $d$.
Therefore, 
$$
k \geq m \sqrt{p^d}.
$$
Thus, we obtain the total lower bound
$$
[L:N] \geq m \sqrt{p^d} p^d = m p^{3d/2}.
$$

% By Lemmas 7 and 8, $p_0(t)\notin N \cap \field = (\alpha(t))$. Now, for all $(p(t),t^k) \in N$, we have 
% $$(0,t)(p(t),t^k)(0,t^{-1})(-t^{-k}p(t),t^{-k})=((t-1)p(t),1)\in (\alpha(t)).$$
% If $(t-1)|\alpha(t)$, then $p(t)\in (\frac{\alpha(t)}{t-1})$. This implies $N \leq Q :=\{(p(t),t^k):p(t)\in (\frac{\alpha(t)}{t-1})\}$. \\
% Also, for every $(p(t),t^k) \in N$, $$(1,1)(p(t),t^k)(-1,1)(-t^{-k}p(t),t^{-k})=(1-t^k,1)\in \field \cap N=(\alpha(t)).$$ 
% Thus, $\alpha(t)$ divides $1-t^k$ and the set of such $k$ forms a subgroup of integer $k_0 \mathbb{Z}$. It follows that we have $k_0\geq \sqrt{w} = p^{{d}/2}$.
% By Lemma \ref{lem:QNbound}, $[Q:N]\geq p^{{d}/2} $. Moreover, by Lemma 9, $[L:N]=[L:Q][Q:N] \geq [\field:(\frac{\alpha(t)}{t-1})][Q:N]$. we have $[\field:(\frac{\alpha(t)}{t-1})]=p^{d-1}$. Hence, we obtain  $$[L:N]\geq p^{d/2}.p^{d-1}\geq p^{d/2}.p^{d-1} = p^{\frac{3d}{2}-1} .$$
% On the other hand, if $(t-1)\nmid\alpha(t)$, then $N \leq Q :=\{(p(t),t^k):p(t)\in (\alpha(t))\}$, but the same proof applies: 

% Indeed, we have $[\field:({\alpha(t)})]=p^{d}$
% and $[L:N]=[L:Q][Q:N] \geq [\field:({\alpha(t)})][Q:N] $
% Also, for every $(p(t),t^k) \in N$, 
% $\alpha(t)$ divides $1-t^k$ and the set of such $k$ forms a subgroup of integer $k_0 \mathbb{Z}$. It follows that we have $k_0\geq \sqrt{w} = p^{{d}/2}$.  By Lemma \ref{lem:QNbound}, $[Q:N]\geq p^{{d}/2} $
% So, we still get a similar result:
% $$[L:N]\geq p^{d/2}.p^{d-1}\geq p^{d/2}.p^{d} = p^{\frac{3d}{2}} .$$

Hence, we have showed that $\RF_L( C p^{d}) \geq  m p^{\frac{3d}{2}-1}$ where both $C$ and $m$ do not depend on $d$. Now we will infer $\RF_L(n)\geq n^{3/2}$ from it.
For every $n$, we have such $d$, such that $p^{d+1}\geq n \geq p^{d}$. Since $\RF_L(n)$ is increasing function, we have 
$$\RF_L(C n) \geq C'  p^{\frac{3d}{2}}= C' n^{3/2},$$
as desired.
\end{proof}

\section{Further directions}

\label{sec:furtherdirections}
\begin{ques}
Does any lamplighter group have the same residual finiteness growth of any nonabelian free group?
\end{ques}

In order to reasonably tackle this question, one would have to narrow down the residual finiteness growths of both classes of groups. For the lamplighter group, given the nice nature of the lower bound candidates, we guess that the residual finiteness growth is $n^{3/2}$ (which matches the conjecture for the residual finiteness growth of non-abelian free groups). Assuming the growths for lamplighter groups are $n^{3/2}$ we can formulate the following number theoretic conjecture.

Before stating the conjecture, we fix some notation.
We call any polynomial in $\F_p[x]$ of the form $x^d-1$ a \emph{periodic polynomial}. This name is motivated by the following: if the image of $t^d - 1$ in $\field \leq L$ is zero in a finite quotient, then that means that any configuration of lamps in the kernel is periodic with period-length dividing $d$.
A polynomial $g(x) \in \F_p[x]$ is \emph{almost-periodic} if $g(x)$ divides $x^d - 1$ where $d^2 \leq p^{\deg(g)}$.
Let $A_k$ be the collection of almost-periodic polynomials in $\F_p[x]$ of degree less than or equal to $k$.
Any periodic polynomial is almost-periodic, so  $\deg(\LCM(A_k))$ has a linear lower bound in terms of $k$.

\begin{conj} \label{conj:1}
There exists $C > 0$, that does not depend on $k$ such that
$$\deg(\LCM(A_k)) \geq C p^k.$$
\end{conj}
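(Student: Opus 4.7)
The plan is to reduce the problem to a sum of Euler totients over primitive divisors of $p^n-1$, and then to invoke Artin-type distributional estimates. Since $\F_p[x]$ is a UFD, $\deg\LCM(A_k)$ is bounded below by the total degree of distinct irreducible factors appearing among the elements of $A_k$. For an irreducible $h\in\F_p[x]$ of degree $n$ with $h\ne x$, the order of any root of $h$ in $\overline{\F}_p^{\times}$ is some $e$ with $\text{ord}_e(p)=n$, and $h$ divides $x^d-1$ iff $e\mid d$; hence $h$ is itself almost-periodic precisely when $e\le p^{n/2}$. Since the $\phi(e)$ primitive $e$-th roots of unity split into $\phi(e)/n$ irreducibles of degree $n=\text{ord}_e(p)$ over $\F_p$, I obtain
\[
\deg\LCM(A_k)\;\ge\;\sum_{n=1}^{k}\;\sum_{\substack{e\le p^{n/2}\\ \text{ord}_e(p)=n}}\phi(e).
\]

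For fixed $n$, Mobius inversion applied to the identity $\sum_{d\mid n}\sum_{e\mid p^d-1,\,\text{ord}_e(p)=d}\phi(e)=p^n-1$ gives
\[
\sum_{\substack{e\mid p^n-1\\ \text{ord}_e(p)=n}}\phi(e)\;=\;\sum_{d\mid n}\mu(n/d)\bigl(p^d-1\bigr),
\]
which equals $p^n-p$ when $n$ is prime and is generally $(1+o(1))p^n$. The conjecture thus reduces to showing that a positive fraction of this primitive-order totient mass --- either at the level $n=k$ alone, or after summing over $n$ in some window of length $O(1)$ below $k$ --- comes from divisors of $p^n-1$ below the threshold $p^{n/2}$.

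For individual $n$ the restriction $e\le p^{n/2}$ can be disastrous: if $p^n-1$ is (up to a small factor) a product of two primes of comparable size, the only usable divisors are $1$ and the smaller prime factor, yielding a contribution of only $O(p^{n/2})$. To circumvent this, I would average over a window $n\in[k-c,k]$: by Zsygmondy's theorem, each $p^n-1$ has a primitive prime divisor $q_n$ with $\text{ord}_{q_n}(p)=n$, and if a positive proportion of these primes satisfy $q_n\le p^{n/2}$, then their contributions $\phi(q_n)=q_n-1$ alone give the required $\Omega(p^k)$ lower bound after summation. When this fails, one must supplement by counting composite divisors $q_n\cdot m$ for small auxiliary $m$, which exist in abundance whenever $p^n-1$ has at least one small odd prime factor (say, below $p^{n/4}$).

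The main obstacle is precisely this number-theoretic density statement: showing, for a positive fraction of $n\le k$, that the divisors of $p^n-1$ in $[1,\,p^{n/2}]$ carry $\gtrsim p^n$ units of Euler totient. This lies in the same circle of problems as Artin's primitive-root conjecture and unconditional results on smooth parts of $p^n-1$, and I expect it to require at minimum GRH-assisted sieving in the style of Hooley, or a substantially new rearrangement of the sum. The naive fallback --- using only the periodic polynomials $\{x^d-1:1\le d\le k\}\subseteq A_k$ --- yields $\deg\LCM\asymp\text{lcm}(1,\dots,k)\asymp e^k$ by the Prime Number Theorem, which suffices for $p=2$ but is strictly weaker than $p^k$ for $p\ge 3$, so the cyclotomic refinement above (or some substantially new input) appears essential.
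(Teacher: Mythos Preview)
The paper does not prove Conjecture~\ref{conj:1}; it is stated in the ``Further directions'' section as an open problem, with only the remark that periodic polynomials already give a linear lower bound on $\deg(\LCM(A_k))$. There is therefore no proof in the paper to compare your attempt against.

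Your proposal is candid about not being a proof. The reformulation
\[
\deg\LCM(A_k)\;\ge\;\sum_{n=1}^{k}\;\sum_{\substack{e\le p^{n/2}\\ \text{ord}_e(p)=n}}\phi(e)
\]
is correct and is a sensible way to attack the problem, and you rightly isolate the obstruction as a density statement about small divisors of $p^n-1$ carrying a positive fraction of the totient mass---a question squarely in Artin--Hooley territory and currently out of reach unconditionally.

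There is, however, a concrete error in your fallback paragraph. You claim that $\LCM\{x^d-1:1\le d\le k\}$ has degree $\asymp\text{lcm}(1,\dots,k)\asymp e^k$, but this confuses the polynomial $\LCM$ with the single polynomial $x^{\text{lcm}(1,\dots,k)}-1$. In fact $\LCM\{x^d-1:d\le k\}=\prod_{e\le k}\Phi_e(x)$ (up to the characteristic-$p$ repeated factors), and its degree is $\sum_{e\le k}\phi(e)\asymp k^2$, not exponential in $k$. For example, $\LCM(x^2-1,\,x^3-1)$ has degree $4$, not $6=\text{lcm}(2,3)$. Even after accounting for the multiplicities coming from $x^{p^a d'}-1=(x^{d'}-1)^{p^a}$, the degree remains $O(k^2)$. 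So the fallback does \emph{not} settle the case $p=2$; the conjecture is open for every prime, and the distributional input you flag is the genuine missing piece.
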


\noindent

If Conjecture \ref{conj:1} is true, then one can use principal congruence quotients to produce the upper bound $n^{3/2}$ for the residual finiteness growth of any lamplighter group.

\bibliography{refs}
\bibliographystyle{amsalpha}

% \section{Extra stuff}

% Hahah, this is due to this person \cite{REFERENCECODE}.
% Here is a great theorem:

% \begin{theorem}
% For any natural number $n$,
% it follow that
% $\sum_{i=1}^n i = \frac{n(n+1)}{2}$.
% \end{theorem}

% \begin{proof}
% This is a proof!
% \end{proof}

% Here is a sentence.

% Here is a new paragraph.

% Here is another great formula:

% $$
% \int_a^b F'(t) = F(b) - F(a).
% $$

% This formula is so great, we write it again:
% \[
% \int_a^b F'(t) = F(b) - F(a).
% \]

% Now if you want a line of equations:
% \begin{eqnarray*} % * means don't number the lines
% f(x) &=& 2 + 3 \\
% g(x) &=& 5 + 7
% \end{eqnarray*}

% \begin{itemize}
%     \item hi
% \end{itemize}
% \begin{enumerate}
%     \item first item
%     \item second item
%     \item third item
% \end{enumerate}

% \section{The Best Section}
\end{document}